\date{14th August 2013}
\def\titlename{ZL-amenability constants of finite groups with two character degrees}
\def\authname{M. Alaghmandan, Y. Choi, and E. Samei}
\title{\vskip-1.0em{\sc \titlename}}
\author{{\sc \authname}}
\newcounter{pulse}[section]
\numberwithin{pulse}{section}
\numberwithin{equation}{section}
\newcommand{\tf}{\sc}
\newtheorem{thm}[pulse]{\tf Theorem}
\newtheorem{prop}[pulse]{\tf Proposition}
\newtheorem{lem}[pulse]{\tf Lemma}
\theoremstyle{definition}
\newtheorem{dfn}[pulse]{\tf Definition}
\newtheorem{eg}[pulse]{\tf Example}
\newtheorem{rem}[pulse]{\tf Remark}
\newtheorem*{qu}{\tf Question}
\theoremstyle{plain}
\newtheorem*{propstar}{\tf Proposition}
\newenvironment{numlist}{%
\begin{enumerate}

}{\end{enumerate}\ignorespacesafterend}
\newcommand{\dt}[1]{{\it#1}\/}  
\newcommand{\st}{\colon}
\newcommand{\defeq}{:=}  
\newcommand{\Cplx}{{\mathbb C}} 
\newcommand{\bbF}{{\mathbb F}}
\newcommand{\fL}{{\mathfrak L}}
\newcommand{\Abs}[1]{\left\vert #1 \right\vert}
\newcommand{\abs}[1]{\vert #1 \vert}
\newcommand{\norm}[1]{\Vert #1 \Vert}
\newcommand{\ptp}{\mathbin{\widehat{\otimes}}}
\newcommand{\Irr}{\operatorname{Irr}} 
\newcommand{\der}[1]{{#1}'}  
\newcommand{\Gab}{{G_{\rm ab}}}  
\newcommand{\conj}{\operatorname{Conj}}
\newcommand{\Aff}{\operatorname{Aff}}  
\newcommand{\affine}[2]{\left(\begin{matrix} #1 & #2 \\ 0 & 1 \end{matrix}\right)}
\newcommand{\Zl}{\operatorname{Z\ell}}
\newcommand{\AM}{\operatorname{AM}}
\newcommand{\AMZL}[1]{\AM_{\rm Z}(#1)} 
\renewcommand{\dt}[1]{\textcolor{Bittersweet}{\sf#1}}
\begin{document}

\maketitle

\begin{abstract}
We calculate the exact amenability constant of the centre of $\ell^1(G)$ when $G$ is one of the following classes of finite group: dihedral; extraspecial; or Frobenius with abelian complement and kernel. This is done using a formula which applies to all finite groups with two character degrees. In passing, we answer in the negative a question raised in work of the third author with Azimifard and Spronk (J.~Funct. Anal.~2009).

\bigskip
\noindent{\it Keywords}: Center of group algebras, characters, character degrees, amenability constant, Frobenius group, extraspecial groups.

\smallskip
\noindent{\it MSC 2010}: 43A20 (primary); 20C15 (secondary).
\end{abstract}

\begin{section}{Introduction}
Let $G$ be a compact group. While its group algebra $L^1(G)$ is always \dt{amenable} as a Banach algebra, the \emph{centre} of $L^1(G)$ need not be amenable.\footnote{It is not clear where credit belongs for this observation, but it is mentioned in \cite{Stegmeir} as a personal communication from B.~E. Johnson.}
The third author of the present paper, in work with A. Azimifard and N. Spronk \cite{AzSaSp}, examined this phenomenon in various cases, and showed that if $G$ is an infinite product of non-abelian finite groups, given the product topology, then the centre of $L^1(G)$ fails to be amenable. (Theorem 1.10, {\it op cit.}) The proof works by examining the so-called \dt{amenability constant} of the centre of $L^1(G)$ in the case where $G$ is \emph{finite}, and observing that this constant is nothing but the $\ell^1$-norm of a certain central idempotent in the complex group ring of $G\times G^{op}$. It then follows from results of D. Rider that the infimum of the possible amenability constants as $G$ varies over all finite, \emph{non-abelian} groups is \emph{strictly} greater than $1$; however, the lower bound provided by Rider's theorem seems far from best possible.

Motivated by this and other questions in \cite{AzSaSp}, we investigate the amenability constant of the centre of $L^1(G)$ in the case where $G$ is a finite group with \dt{two character degrees} (that is, nonabelian finite groups where all \emph{irreducible} non-linear characters have the same degree). For this special class of groups, we obtain an exact formula for the amenability constant, that is easily computed for various natural classes of such groups, including dihedral groups, extraspecial $p$-groups, and certain Frobenius groups. It will be seen that in some cases interesting patterns emerge, which remain to be fully explored.

\paragraph{Notation and terminology}
Given a group $G$ (with its discrete topology) we denote its $\ell^1$-group algebra by $\ell^1(G)$ and denote the centre of $\ell^1(G)$ by $\Zl^1(G)$. In this paper we are only concerned with finite groups.
We assume the reader has basic knowledge of finite group theory. For all other terminology and definitions pertaining to character theory, the book \cite{JamLie} will suffice, as should many other choices. See also Chapters 3 and 4 of Gorenstein's book~\cite{Gor_FGbook_ed2}.

Given a finite group $G$, we shall define the amenability constant of $\Zl^1(G)$ in Section~\ref{ss:general}. This constant will be denoted throughout by $\AMZL{G}$, and to save cumbersome repetition in verbal descriptions, we will refer to $\AMZL{G}$ as the \dt{ZL-amenability constant of $G$}. Although this is, to our knowledge, not standard terminology, it emphasizes that $\AMZL{G}$ can be computed from the character table of $G$ without any reference to Banach algebras and the general theory of amenability.
Indeed, one can take Equation~\eqref{eq:AMZL-formula} below as the \emph{definition} of $\AMZL{G}$ when $G$ is finite.
\end{section}

\begin{section}{A formula for the amenability constant}
\label{s:AMZL-formulas}

\begin{subsection}{General background}\label{ss:general}
We start with some general remarks on amenability constants. This material is not original: it is well known to specialists, and is either folklore or can be extracted from parts of~\cite{AzSaSp}. Nevertheless we include these remarks to provide context for the formulas which will follow. We will not discuss amenability for general Banach algebras here, and instead focus on the finite-dimensional setting.

If $A$ is a finite-dimensional Banach algebra with identity, then $A$ is \dt{amenable} if and only if there exists some $M\in A\otimes A$ satisfying $a\cdot M = M\cdot a$ and $\pi(M)=1_A$. Such an $M$ is called a \dt{diagonal element} for the algebra~$A$. (Thus far we have not used the norm on $A$. Indeed, the characterization of amenability that we have given for finite-dimensional algebras is equivalent to \dt{separability} in the sense of ring-theory; what we call a diagonal element is usually called a \dt{separability idempotent} in that context.)

\begin{eg}\label{eg:fingp-is-amenable}
Let $H$ be a finite group. Then it is straightforward to check that
\[ N\defeq |H|^{-1}\sum_{h\in H} \delta_h \otimes \delta_{h^{-1}}\]
is a diagonal element for the algebra $\ell^1(H)$, and that $\norm{N}=1$ as an element of the projective tensor product $\ell^1(H)\ptp\ell^1(H)=\ell^1(H\times H)$.
\end{eg}

If $A$ is a finite-dimensional, semisimple and commutative Banach algebra, then it is isomorphic as an algebra to $\Cplx^n$ for some $n$. Since $\Cplx^n$ has a unique diagonal element, so does $A$. More precisely, if the minimal idempotents of $A$ are $e_1,\dots, e_n$, then the unique diagonal element for $A$ is just $M=\sum_{j=1}^n e_j \otimes e_j$. In this setting the \dt{amenability constant of $A$} is the norm of $M$ as an element of the projective tensor product $A\ptp A$.

Now let $G$ be a finite group.
By basic results from representation theory, the centre of $\Cplx G$ is commutative and semisimple, and its minimal idempotents are the functions of the form $|G|^{-1}d_\chi \chi$ where $\chi$ is an irreducible character on $G$.
(See also \cite[Corollary 15.4]{JamLie} for an alternative approach to this result.)
Therefore, letting $\Irr(G)$ denote the set of irreducible characters on $G$, the
unique diagonal element of $\Zl^1(G)$ is
\begin{equation}\label{eq:diagonal}
M \defeq \sum_{\chi\in\Irr(G)}  \frac{d_\chi}{|G|} \chi \otimes \frac{d_\chi}{|G|}\chi
\end{equation}
We remark that this observation is also made\footnotemark\ in the proof of \cite[Theorem 1.8]{AzSaSp}.
\footnotetext{The reader should beware that in \cite[Section 1.5]{AzSaSp}, $G$ is equipped with normalized counting measure. Here we have chosen to equip $G$ with counting measure, since this ensures that $\Zl^1(G)$ has an identity element of norm~$1$.}

Let $\conj(G)$ denote the set of conjugacy classes in $G$, and let $\chi(C)$ denotes the value taken by a character $\chi$ on any (hence every) element of a conjugacy class $C\in\conj(G)$. Then, as shown in the proof of \cite[Theorem 1.8]{AzSaSp}, we find that
\begin{equation}\label{eq:AMZL-formula}
\AMZL{G}= \norm{M} = \frac{1}{|G|^2} \sum_{C,D\in\conj(G)} \left\vert \sum_{\chi \in \Irr(G)} d_\chi^2 \chi(C) \overline{\chi(D)}\right\vert\ |C| \ |D|.
\end{equation}
Equation~\eqref{eq:AMZL-formula} is difficult to work with if one wants to calculate the \emph{exact} ZL-amenability constant of~$G$, as it seems to require knowledge of the full character table of $G$. When $G$ has two character degrees, things simplify greatly.
\end{subsection}

\begin{subsection}{The case of two character degrees}
Following standard terminology, we say that a character on a finite group is \dt{linear} if it is actually a homomorphism $G\to {\mathbb T}$ (i.e. the trace of a $1$-dimensional representation) and \dt{non-linear} otherwise.

\begin{dfn}
We say that a finite group $G$ has \dt{two character degrees} if (i) $G$ is non-abelian (ii) every non-linear irreducible character has the same degree.
\end{dfn}

\begin{rem}
Groups with two character degrees were studied, in an unrelated context, by Isaacs and Passman \cite{IP_PJM68}. (In the terminology of that paper, the groups we consider \dt{have a.c.~$m$} for some integer $m>1$.)
It is known that such groups must be metabelian \cite[Corollary 12.6]{Isaacs_CTbook},
and they seem a natural class of examples to consider, if looking for large non-abelian groups with small ZL-amenability constants. We shall see some evidence to support this approach in Examples \ref{eg:AMZL-of-affine} and~\ref{eg:AMZL-of-extraspecial}.
\end{rem}

Given a finite group $G$, we denote by $\der{G}$ its \dt{derived subgroup} (also called its \dt{commutator subgroup}): this can be constructed as the normal subgroup generated by all commutators in $G$, or characterized as the smallest normal subgroup $N$ for which the quotient $G/N$ is abelian.

\begin{thm}\label{t:amzl-2cd}
Let $G$ be a finite group wth two character degrees, and let $m$ be the degree of any (hence every) non-linear irreducible group character of~$G$.
Then
\begin{equation}\label{eq:new-AMZL-formula}
\AMZL{G}= 1 + 2(m^2 -1) \left(1  - \frac{1}{|G|\ |\der{G}|} \sum_{C\in\conj(G)} |C|^2\right).
\end{equation}
\end{thm}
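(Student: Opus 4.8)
The plan is to start from the general formula~\eqref{eq:AMZL-formula} and exploit the hypothesis that $\Irr(G)$ consists only of characters of degree $1$ and degree $m$. Writing $S(C,D) \defeq \sum_{\chi\in\Irr(G)} d_\chi^2\,\chi(C)\overline{\chi(D)}$ for the inner sum, I would first split off the contribution of the linear characters. Since every non-linear irreducible character has degree $m$, the non-linear terms all carry the common factor $m^2$, and second (column) orthogonality, $\sum_{\chi}\chi(C)\overline{\chi(D)} = (|G|/|C|)\delta_{C,D}$, lets me rewrite the non-linear sum as $(|G|/|C|)\delta_{C,D}$ minus the linear sum. This yields
\begin{equation*}
S(C,D) = (1-m^2)\sum_{\lambda \text{ linear}} \lambda(C)\overline{\lambda(D)} + m^2\,\frac{|G|}{|C|}\,\delta_{C,D}.
\end{equation*}
Because the linear characters factor through the abelianization $\Gab = G/\der{G}$, orthogonality on $\Gab$ collapses the remaining sum to $[G:\der{G}]$ when $C$ and $D$ have the same image in $\Gab$ and to $0$ otherwise.

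Next I would carry out a sign analysis, since the absolute values in~\eqref{eq:AMZL-formula} force me to know the sign of every term. There are three regimes: (i) when $C,D$ lie over distinct cosets of $\der{G}$ the term vanishes; (ii) when they lie over a common coset but $C\ne D$ the term equals $(1-m^2)[G:\der{G}]$, strictly negative as $m\ge 2$; (iii) on the diagonal $C=D$ the term is $(1-m^2)[G:\der{G}] + m^2|G|/|C|$. The crucial point, and what I expect to be the main obstacle, is to show that the diagonal terms are positive. This follows from the elementary bound $|C|\le|\der{G}|$, valid for every conjugacy class: applying column orthogonality to the single class $C$ gives $\sum_\chi|\chi(C)|^2 = |G|/|C|$, and since the $[G:\der{G}]$ linear characters each contribute $1$, we get $|G|/|C|\ge[G:\der{G}]$, i.e.\ $|C|\le|\der{G}|$. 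Hence $m^2|G|/|C|\ge m^2[G:\der{G}] > (m^2-1)[G:\der{G}]$, so $S(C,C)>0$.

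The final ingredient is a counting identity: the conjugacy classes lying over a fixed coset of $\der{G}$ partition that coset, so the sum of their sizes is exactly $|\der{G}|$. Grouping the same-coset off-diagonal pairs by coset then turns their contribution to $\sum_{C,D}|S(C,D)|\,|C|\,|D|$ into $(m^2-1)[G:\der{G}]\bigl(|G|\,|\der{G}| - \sum_C|C|^2\bigr)$, using $[G:\der{G}]\,|\der{G}|^2 = |G|\,|\der{G}|$. For the diagonal part I would use $\sum_C S(C,C)|C|^2 = (1-m^2)[G:\der{G}]\sum_C|C|^2 + m^2|G|^2$, where $\sum_C|C| = |G|$ disposes of the second term. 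Adding the two parts, the $\sum_C|C|^2$ terms combine with coefficient $2(1-m^2)[G:\der{G}]$, while the constant terms combine (via $[G:\der{G}]\,|\der{G}| = |G|$) to $(2m^2-1)|G|^2$. Dividing by $|G|^2$ and using $[G:\der{G}]/|G|^2 = 1/(|G|\,|\der{G}|)$ gives $\AMZL{G} = (2m^2-1) - 2(m^2-1)\tfrac{1}{|G|\,|\der{G}|}\sum_C|C|^2$, which rearranges to the claimed~\eqref{eq:new-AMZL-formula} once I write $2m^2-1 = 1 + 2(m^2-1)$.
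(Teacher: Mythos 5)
Your proof is correct, but it handles the crucial step by a genuinely different route than the paper. Both arguments open identically: split off the linear characters and use Schur column orthogonality to absorb the degree-$m$ part, reducing everything to the sums $\sum_{\chi\in\fL}\chi(C)\overline{\chi(D)}$. At that point the paper never evaluates these sums: it keeps them inside absolute values and invokes Lemma~\ref{l:ES_trick}, which asserts that $\frac{1}{|G|^2}\sum_{C,D}|C|\,|D|\,\bigl|\sum_{\chi\in\fL}\chi(C)\overline{\chi(D)}\bigr|=1$, and which is proved by Banach-algebraic means (the sum is recognized as the norm of the unique diagonal element of $\Zl^1(\Gab)=\ell^1(\Gab)$, which is $1$ by Example~\ref{eg:fingp-is-amenable}). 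You instead evaluate the linear sums explicitly via orthogonality on $\Gab$ --- they equal $[G:\der{G}]$ when $C$ and $D$ lie over the same coset of $\der{G}$ and vanish otherwise --- then do a sign analysis and use the fact that the classes lying over a fixed coset of $\der{G}$ partition it, so their sizes sum to $|\der{G}|$. Your route is more elementary and purely character-theoretic; indeed, your two observations together constitute a direct combinatorial proof of Lemma~\ref{l:ES_trick}, with the bonus of identifying exactly which terms of \eqref{eq:AMZL-formula} vanish and the sign of each surviving term. What the paper's route buys is brevity and the complete avoidance of sign considerations, since the absolute values are disposed of abstractly by a norm computation. One small remark: the step you flag as the main obstacle --- positivity of the diagonal terms --- is actually automatic, since $S(C,C)=\sum_{\chi}d_\chi^2|\chi(C)|^2$ is a sum of nonnegative terms (and is $\geq 1$ because of the trivial character); your derivation of the bound $|C|\leq|\der{G}|$ is correct but not needed for this purpose, and the paper never needs it either, since its diagonal quantity $\AM_{\rm diag}$ involves only the manifestly nonnegative $|\chi(C)|^2$.
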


The advantage of \eqref{eq:new-AMZL-formula} over \eqref{eq:AMZL-formula} is that, once we know that every non-linear character of $G$ has dimension $m$, we only need two other pieces of information: the order of the derived subgroup (see Remark~\ref{r:counting-lin-char} below) and the size of each conjugacy class. In particular, we do not need the full character table of~$G$.

\begin{rem}\label{r:counting-lin-char}
Given an arbitrary finite group $G$, let $\fL$ denote the set of linear characters on $G$. Then $\fL$ is in bijection, in a natural way, with the set of characters on the abelian group $\Gab \defeq G/\der{G}$\/
(see for instance, \cite[Theorem 17.11]{JamLie}; and consequently $|\fL| = | \widehat{\Gab} | = | \Gab | = |G|\ /\ |\der{G}|$.
We shall use this fact in the proof of Theorem~\ref{t:amzl-2cd}. For some of our examples, it will be more convenient to count the number of linear characters than to work out the derived subgroup.
\end{rem}

Before proving Theorem~\ref{t:amzl-2cd}, we isolate one of the steps as a separate lemma.

\begin{lem}\label{l:ES_trick}
Let $G$ be an arbitrary finite group, and let $\fL=\{ \chi\in\Irr(G) \st d_\chi=1\}$. Then
\begin{equation}\label{eq:FRED}
\frac{1}{|G|^2} \sum_{C,D \in \conj(G)}  |C| \ |D|  \ \Abs{  \sum_{\chi\in \fL} \chi(C)\overline{\chi(D)} } = 1\,.
\end{equation}
\end{lem}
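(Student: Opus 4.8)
The plan is to recognize that the linear characters $\fL$ form a group under pointwise multiplication, and that this group is naturally isomorphic to $\widehat{\Gab}$, the dual of the abelianization $\Gab = G/\der{G}$. The key observation is that for a linear character $\chi$, its value $\chi(C)$ on a conjugacy class $C$ depends only on the image of $C$ in $\Gab$: indeed, $\chi$ factors through $G \to \Gab$ since its kernel contains $\der{G}$. So I would first push the whole computation down to the abelian group $\Gab$.

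\bigskip
More precisely, let $q\st G \to \Gab$ be the quotient map. Each conjugacy class $C$ maps to a single element $q(C) \in \Gab$ (since conjugate elements have the same image in the abelianization), and $\chi(C) = \psi(q(C))$ where $\psi \in \widehat{\Gab}$ is the character corresponding to $\chi$. The inner sum then becomes
\[
\sum_{\chi\in\fL}\chi(C)\overline{\chi(D)} = \sum_{\psi\in\widehat{\Gab}} \psi(q(C))\overline{\psi(q(D))} = \sum_{\psi\in\widehat{\Gab}}\psi\bigl(q(C)q(D)^{-1}\bigr).
\]
By the orthogonality/column relations for the abelian group $\Gab$, this last sum equals $|\Gab|$ when $q(C) = q(D)$ and vanishes otherwise. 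In particular the inner sum is always real and non-negative, so the absolute value signs in \eqref{eq:FRED} can simply be dropped on the surviving terms.

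\bigskip
The next step is to sum the weights $|C|\,|D|$ over all pairs with $q(C)=q(D)$. I would group conjugacy classes by their image in $\Gab$: for each $g\in\Gab$, let $S_g$ denote the union of all conjugacy classes $C$ with $q(C)=g$, which is exactly the fibre $q^{-1}(g)$, a single coset of $\der{G}$ and hence of size $|\der{G}|$. Then $\sum_{q(C)=g}|C| = |S_g| = |\der{G}|$, so
\[
\sum_{\substack{C,D\\ q(C)=q(D)}} |C|\,|D| = \sum_{g\in\Gab}\Bigl(\sum_{q(C)=g}|C|\Bigr)^{2} = \sum_{g\in\Gab}|\der{G}|^2 = |\Gab|\,|\der{G}|^2.
\]
Since $|\Gab| = |G|/|\der{G}|$, this equals $|G|\,|\der{G}|$. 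Assembling everything, the left-hand side of \eqref{eq:FRED} is $|G|^{-2}\cdot|\Gab|\cdot|G|\,|\der{G}| = |G|^{-2}\cdot(|G|/|\der{G}|)\cdot|G|\,|\der{G}| = 1$, as required.

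\bigskip
The main obstacle, such as it is, is bookkeeping rather than conceptual: one must verify carefully that $q$ maps a conjugacy class to a single point (immediate, since $\der{G}$ is normal and $\Gab$ is abelian), and that the fibres of $q$ on conjugacy classes partition correctly so that the weights add up as claimed. The only genuinely substantive input is the character orthogonality relation on the \emph{abelian} group $\Gab$, which forces the inner sum to be a non-negative multiple of a delta function in $q(C)q(D)^{-1}$ — this is what lets us discard the absolute values cleanly and reduces the whole identity to the counting computation above.
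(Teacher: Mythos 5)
Your proof is correct, and it takes a genuinely more elementary route than the paper's. Both arguments start identically --- linear characters factor through the quotient $q\st G\to \Gab$ --- but then diverge. The paper interprets the left-hand side of \eqref{eq:FRED} as the norm in $\Zl^1(G\times G)$ of the idempotent $\widetilde{M}=|G|^{-2}\sum_{\chi\in\fL}\chi\otimes\chi$, pushes it down to an element of $\ell^1(\Gab\times\Gab)$ of equal norm, identifies that element (via uniqueness of diagonal elements for finite-dimensional commutative semisimple algebras) with the diagonal element of $\ell^1(\Gab)$, and concludes that the norm is $1$ by comparing with the explicit norm-one diagonal $|\Gab|^{-1}\sum_{h\in\Gab}\delta_h\otimes\delta_{h^{-1}}$ of Example~\ref{eg:fingp-is-amenable}; in effect, it shows that the left-hand side equals $\AMZL{\Gab}$, which is $1$ because $\Gab$ is abelian. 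You instead compute everything directly: orthogonality for $\widehat{\Gab}$ collapses the inner sum to $|\Gab|$ times the indicator of $q(C)=q(D)$ (so the absolute values can be dropped), and the identity reduces to the fibre count $\sum_{C \st q(C)=g}|C|=|\der{G}|$ for each $g\in\Gab$, whence the total is $|G|^{-2}\cdot|\Gab|\cdot|\Gab|\,|\der{G}|^2=1$. Your version is self-contained and needs no Banach-algebra machinery (no uniqueness of diagonal elements, no factoring of idempotents through quotient algebras); what the paper's version buys in exchange is the conceptual point, in keeping with its overall theme, that \eqref{eq:FRED} is precisely the statement $\AMZL{\Gab}=1$. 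At bottom the two proofs rest on the same orthogonality input: the Fourier-inversion identity behind your delta-function step is exactly what makes the paper's two descriptions of the diagonal element of $\ell^1(\Gab)$ agree.
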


\begin{proof}
Let $\Gab$ denote the quotient group $G/\der{G}$ and let $q: G\to \Gab$ be the quotient homomorphism.
The left-hand side of \eqref{eq:FRED} is the norm, in $\Zl^1(G\times G)$, of the following idempotent:
\[ \widetilde{M} \defeq \frac{1}{|G|^2} \sum_{\chi\in\fL} \chi\otimes\chi \,.\]
Since each $\chi$ is constant on cosets of the derived subgroup,
 $\widetilde{M}$ factors through the quotient map $q\otimes q: \Zl^1(G\times G) \to \Zl^1(\Gab\times \Gab)=\ell^1(\Gab\times\Gab)$.
For each $x,y\in G$, put
\[ M(q(x),q(y)) \defeq | \der{G} |^2 \widetilde{M}(x,y);\]
then $M$ is a well-defined element of $\Zl^1(\Gab\times\Gab)$, and a little thought shows that $\norm{M}= \norm{\widetilde{M}}$.
On the other hand, since $\chi\in\fL$ if and only if $\chi= \phi\circ q$ for some $\phi\in \widehat{\Gab}$ (see Remark~\ref{r:counting-lin-char}),
\[ \begin{aligned}
M  = \frac{1}{\abs{\Gab}^2} \sum_{\phi\in \widehat{\Gab}} \phi\otimes \overline{\phi}
\end{aligned} \]
Comparing this with \eqref{eq:diagonal}, we see that
$M$ is the unique diagonal element for the Banach algebra $\Zl^1(\Gab) = \ell^1(\Gab)$. On the other hand, by Example~\ref{eg:fingp-is-amenable}, $\ell^1(\Gab)$ has a diagonal element of norm~$1$. It follows that $M$ has norm $1$, which completes the proof.
\end{proof}

\begin{proof}[Proof of Theorem~\ref{t:amzl-2cd}]
To ease notation, we write $\conj$ instead of $\conj(G)$ throughout this proof. Let
\[ \AM_{\rm diag} =  \frac{1}{|G|^2} \sum_{C\in\conj} |C|^2 \ \sum_{\chi\in\Irr(G)} d_\chi^2 \abs{\chi(C)}^2  \]
and
\[ \AM_{\rm off} = \frac{1}{|G|^2} \sum_{(C,D)\in\conj^2 \st C\neq D} |C|\ |D|\ \Abs{ \sum_{\chi\in\Irr(G)} d_\chi^2 \chi(C)\overline{\chi(D)} }  \]
so that, by \eqref{eq:AMZL-formula}, $\AMZL{G} = \AM_{\rm diag} + \AM_{\rm off}$.

Let $\fL=\{ \chi\in\Irr(G) \st d_\chi = 1\}$.
Using Schur column orthogonality,
 and the fact that $\abs{\chi(\cdot)}^2=1$ for every $\chi\in\fL$, we get
\begin{equation}\label{eq:diag-term}
\begin{aligned}
|G|^2 \AM_{\rm diag}
 &  = \sum_{C\in\conj} |C|^2 \left(
	 \sum_{\chi\in\Irr(G)} m^2 \abs{\chi(C)}^2 - \sum_{\chi\in \fL} (m^2-1)\abs{\chi(C)}^2
	 \right) \\
 & = \sum_{C\in\conj} |C|^2 \left( m^2 \frac{|G|}{|C|} - (m^2-1) |\fL| \right) \\
 & = m^2 |G|^2 - (m^2-1)\ |\fL| \sum_{C\in\conj} |C|^2\,. \\
\end{aligned}
\end{equation}

Similarly,
\begin{equation}\label{eq:off-diag-term}
\begin{aligned}
|G|^2 \AM_{\rm off}
 & = \sum_{(C,D)\st C\neq D} |C|\ |D|\ \Abs{
	\sum_{\chi\in\Irr(G)} m^2 \chi(C)\overline{\chi(D)} - \sum_{\chi\in \fL} (m^2-1)\chi(C)\overline{\chi(D)}
	} \\
 & = \sum_{(C,D)\st C\neq D} |C|\ |D|\ \Abs{
	\sum_{\chi\in \fL} (m^2-1)  \chi(C)\overline{\chi(D)}
	} \\
	& \qquad\qquad(\text{by Schur column orthogonality}) \\
 & =  (m^2-1)  \sum_{(C,D)} |C|\ |D|\ \Abs{
	\sum_{\chi\in\fL}\chi(C)\overline{\chi(D)}
 	} - (m^2-1)  \sum_{C} |C|^2 \ |\fL| \\
	 & \qquad\qquad(\text{since $\abs{\chi(\cdot)}^2=1$ for all $\chi\in \fL$}) \\
 & = (m^2-1)|G|^2 -  (m^2-1)  \sum_{C} |C|^2 \ |\fL|\quad,
\end{aligned}
\end{equation}
with the last equation following from Lemma~\ref{l:ES_trick}. Combining \eqref{eq:diag-term} and \eqref{eq:off-diag-term}, using
the equality $|\fL| = |G|\ /  |\der{G}|$, and rearranging terms, we obtain the desired formula.
\end{proof}

\end{subsection}

\subsection{Motivation: lower bounds on ZL-amenability constants}
As mentioned in the introduction, it is observed in \cite[\S1.5]{AzSaSp} that
\begin{equation}\label{eq:AMZL-gap}
\inf \{ \AMZL{G} \colon \text{$G$ finite and non-abelian} \} > 1.
\end{equation}
(See the proof of Theorem 1.10, {\it op.~cit.}\/) The proof relies on the following hard result of D. Rider.

\begin{thm}[Rider; {see \cite[Lemma 5.2]{ri}}]\label{t:rider}
Let $G$ be a compact group, $\lambda$ a Haar measure on it, and $\psi$ a finite linear combination of irreducible group characters on $G$. Suppose that $\psi*\psi=\psi$ as elements of $L^1(G,\lambda)$ and that
$\int_G \abs{\psi(x)}\,d\lambda(x) > 1$.
Then
$\int_G \abs{\psi(x)}\,d\lambda(x) \geq 301/300$.
\end{thm}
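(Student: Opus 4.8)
The plan is to treat $\psi$ as a self-adjoint central idempotent in the convolution algebra $L^1(G,\lambda)$, with $\lambda$ normalised so that $\lambda(G)=1$, and to extract the gap from a few norm identities together with an arithmetic rigidity of character values. First I would use the orthogonality relations $\chi_i * \chi_j = \delta_{ij}d_{\chi_i}^{-1}\chi_i$ to analyse the hypothesis $\psi*\psi=\psi$. Writing $\psi = \sum_i c_i\chi_i$ over the finitely many irreducible characters that occur, the cross terms drop out and idempotency forces $c_i\in\{0,d_{\chi_i}\}$, so that $\psi = \sum_{\chi\in S} d_\chi\chi$ for some finite set $S$ of irreducible characters; equivalently $\psi$ is a finite sum of the mutually orthogonal minimal central idempotents $d_\chi\chi$. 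In particular $\psi$ is self-adjoint (characters are), hence a central projection, and $\int_G\psi\,d\lambda\in\{0,1\}$ according to whether the trivial character lies in $S$.

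The key preliminary step is a pair of norm identities. By Parseval, $\norm{\psi}_2^2 = \sum_{\chi\in S}d_\chi^2 =: n = \psi(e)$, while the estimate $\norm{f*g}_\infty\le\norm{f}_2\norm{g}_2$ applied to $\psi=\psi*\psi$ gives $\norm{\psi}_\infty\le\norm{\psi}_2^2 = n$; since $\abs{\psi(e)}=n$ this forces $\norm{\psi}_\infty = \psi(e) = n$. Feeding these into $\norm{\psi}_2^2\le\norm{\psi}_1\norm{\psi}_\infty$ recovers $\norm{\psi}_1\ge 1$ and pins down the equality case: $\norm{\psi}_1=1$ holds if and only if $\abs{\psi}$ takes only the values $0$ and $n$, i.e. $\abs{\psi}=n\,\mathbf{1}_U$ on its support $U$ with $\lambda(U)=1/n$. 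Because $\psi$ is a class function and idempotent, such a $U$ must be an open normal subgroup $N$, and $\psi$ is then the normalised indicator $\lambda(N)^{-1}\mathbf{1}_N$ (possibly twisted by a $G$-invariant character of $N$). These are exactly the ``trivial'' idempotents, and the content of the theorem is that every other $\psi$ is separated from them in $L^1$-norm by a definite amount.

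The hard part is this quantitative gap, and it is here that the hypothesis that $\psi$ is built from genuine group characters must be used: the analytic inequalities above place no lower bound on $\norm{\psi}_1-1$ by themselves, since the identity $\int_G\abs{\psi}\,(n-\abs{\psi})\,d\lambda = n(\norm{\psi}_1-1)$ can be made as small as one likes for an arbitrary continuous $g=\abs{\psi}$. I would therefore split according to the compact group $G/\bigcap_{\chi\in S}\ker\chi$ through which $\psi$ factors. When this quotient is positive-dimensional (a torus or a nonabelian compact Lie group) I expect a direct computation to show that the admissible character-idempotents have $L^1$-norm bounded well away from $1$, reducing the sharp question to finite quotients; there the degrees $d_\chi$, the class sizes, and the values $\psi(C)$ are integers or algebraic integers, and this integrality is what should convert ``$\norm{\psi}_1$ barely exceeds $1$'' into a genuine lower bound. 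The main obstacle is making this rigidity sharp: one must identify the near-trivial configurations $S$ that minimise $\norm{\psi}_1$ subject to $\norm{\psi}_1>1$ and verify that the least such value is no smaller than $301/300$. Isolating and ruling out these extremal configurations is the delicate combinatorial heart of the argument, and is what pins the constant.
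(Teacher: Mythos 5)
You should first be clear that the paper itself offers no proof of this statement: it is imported as a known ``hard result'' of Rider, cited as \cite[Lemma 5.2]{ri}, and the only original content in the paper is the remark that follows it, reducing the stated formulation (arbitrary Haar measure $\lambda$) to Rider's normalised one via the substitution $\psi\mapsto\lambda(G)\psi$. So your attempt must be measured against Rider's published argument, not against anything in this paper. Your soft analysis is correct as far as it goes: orthogonality does force $\psi=\sum_{\chi\in S}d_\chi\chi$ for a finite set $S\subseteq\Irr(G)$, the identities $\norm{\psi}_2^2=\psi(e)=\norm{\psi}_\infty=n$ are right, and H\"older gives $\norm{\psi}_1\geq 1$ together with the equality case $\abs{\psi}\in\{0,n\}$ a.e. (Your claim that the level set $U=\{x:\abs{\psi(x)}=n\}$ is an open normal subgroup is asserted rather than proved, but it is provable: $\abs{\chi(x)}=d_\chi$ forces $\pi_\chi(x)$ to be scalar, and alignment of the phases cuts out the preimage of a diagonal in a torus.) Note, however, that $\norm{\psi}_1\geq 1$ is immediate anyway from $\norm{\psi}_1=\norm{\psi*\psi}_1\leq\norm{\psi}_1^2$; none of this machinery touches the actual content of the theorem.

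The genuine gap is the constant $301/300$, i.e.\ the entire assertion of the theorem. Your final paragraph contains no proof of it, only a plan, and the phrases ``I expect a direct computation to show'' and ``this integrality is what should convert'' mark exactly where the missing mathematics sits. Moreover the plan has a structural flaw: after factoring $\psi$ through the compact Lie group $G/\bigcap_{\chi\in S}\ker\chi$, the finite-quotient case is the heart of the matter, and there, integrality of the degrees and algebraic-integrality of character values for each \emph{fixed} finite group cannot by itself produce a bound that is uniform over \emph{all} finite groups, which is what the theorem demands; as the group varies, the relevant character data has no a priori separation from the degenerate configurations, so a genuinely quantitative harmonic-analytic estimate is unavoidable. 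That is precisely what Rider supplies: his proof of Lemma 5.2 is a delicate chain of explicit estimates on $\int_G\abs{\psi}\,d\lambda$, and the unnatural-looking constant $301/300$ is an artifact of those estimates, not the output of any rigidity principle you have identified. (Even your claim that positive-dimensional quotients are ``bounded well away from $1$'' is not a direct computation: for $G=\mathbb{T}$ it is the statement that idempotent trigonometric sums with more than one frequency have $L^1$-norm at least $4/\pi$, and controlling all of $\abs{S}\geq 2$ already requires nontrivial results on norms of idempotents.) As it stands, your argument proves only $\norm{\psi}_1\geq 1$ plus a description of the equality case; the dichotomy ``either $\norm{\psi}_1=1$ or $\norm{\psi}_1\geq 301/300$'' remains entirely unproved.
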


\begin{rem}
Rider's result is stated for the case where $\lambda(G)=1$. However, if we let $\mu = \lambda(G)^{-1}\lambda$, then $\psi*\psi=\psi$ in $L^1(G,\lambda)$ if and only if $\lambda(G)\psi * \lambda(G)\psi = \lambda(G)\psi$ in $L^1(G,\mu)$. So by rescaling, our formulation reduces to the one given by Rider.
\end{rem}

\begin{qu}
Can we get an improved bound\footnotemark\label{note-in-proof} on the infimum on the left hand side of \eqref{eq:AMZL-gap}, beyond the lower bound $301/300$ provided by Rider's theorem?
\end{qu}
\footnotetext{{\bf Note added in proof:} after this work was accepted for publication, the second author (YC) was able to show that $\AMZL{G}\geq 7/4$ for every finite non-abelian group. Details will appear in a forthcoming paper.}

\begin{rem}
To put this question in context, we note that the smallest explicitly known value of $\AMZL{G}$ for a non-abelian group $G$ is $7/4$ (see Remark~\ref{r:lowest-AMZL-so-far} in the next section). Rider remarks that his estimates are not intended to be best possible, but it seems unlikely that his techniques can get near $11/10$, let alone $7/4$, without substantial new input. Of course, his results concern much more general central idempotents, whereas our concern is with the very particular idempotent described in~\eqref{eq:diagonal}.
\end{rem}

It seems difficult to attack this problem directly using \eqref{eq:AMZL-formula}. One might hope that for groups with two character degrees, one can use \eqref{eq:new-AMZL-formula} to obtain a lower bound on the ZL-amenability constants which is strictly greater than $1$. While we were unable to do this in full generality, we can do better for particular classes of groups; these calculations are the topic of the next section.

Another question raised in \cite[\S1.5]{AzSaSp} is the following:
\begin{quote}
 {\it given a finite non-abelian group $G$, can we get a lower bound on $\AMZL{G}$ in terms of $\max_{\pi\in\widehat{G}} d_\pi$}\/?
\end{quote}
If this were the case, one could obtain further results on (non-)amen\-ability of the centre of $L^1(G)$ for certain profinite groups~$G$.
Unfortunately, as we shall see below (Remark~\ref{r:pain-in-ASS}), there exists a sequence of finite groups $(G_i)$ such that
$\sup_i \max_{\pi\in\hat{G_i}} d_\pi = + \infty$
yet
$\sup_i \AMZL{G_i} = 5$.
Therefore this question has a negative answer.
\end{section}

\begin{section}{ZL-amenability constants of particular groups}
\label{s:AMZL_examples}
Using Theorem \ref{t:amzl-2cd}, we can find the ZL-amenability constants for several well-known families of finite groups.

\begin{subsection}{Dihedral groups}
Let us fix some notation: $D_n$ denotes the \dt{dihedral group of order $2n$}, whose standard presentation~is
\[
D_n=\langle r,t \mid  r^n=t^2=1, tr=r^{-1}t\rangle.
\]
The character table of $D_n$ is well known and can be found in standard sources: for instance, see \cite[pp.~182--183]{JamLie}. We note, nevertheless, that we only need to know the number of linear characters and the cardinalities of the conjugacy classes, both of which can be determined by straightforward {\it ad hoc} arguments that we leave to the reader.
As usual, we must treat the cases of odd and even $n$ separately.

\paragraph{The case of even $n$.}
Suppose $n=2\nu$ for some integer $\nu\geq 2$. Then $D_n$ has four linear characters (so that its derived subgroup has order $\nu$), and all other characters have degree~$2$.
Also, $D_n$ has two conjugacy classes of size $1$ (namely $\{1\}$ and $\{r^\nu\}$), two of size $\nu$ (namely $[t]$ and $[rt]$), and $\nu-1$ of size~$2$ (the remaining rotations, paired~up).
Thus
\[
\sum_{C\in\conj(D_{2\nu})} |C|^2
 = 2\cdot 1^2 + 2\left(\frac{n}{2}\right)^2 + \left(\frac{n}{2}-1\right)\cdot 2^2
 = \frac{1}{2}(n^2 + 4n -4) \,;
\]
and so, by our general formula \eqref{eq:new-AMZL-formula},
\begin{equation}
\label{eq:AMZL-of-dihedral-even}
\begin{aligned}
\AMZL{D_{2\nu}}
 = 1 + 2(2^2-1) \left( 1- \frac{n^2+4n-4}{2n^2}\right)
& = 1 + 6 \frac{n^2-4n+4}{2n^2} \\
& = 1 + 3\left(1 -\frac{2}{n}\right)^2\,.
\end{aligned}
\end{equation}

\paragraph{The case of odd $n$.}
Suppose $n=2\nu+1$ where $\nu$ is an integer $\geq 1$. Then $D_n$ has two linear characters (so that its derived subgroup has order $n$), and all other characters have degree~$2$. Also, its conjugacy classes are as follows: the trivial conjugacy class of the identity; the conjugacy class consisting of all involutions, which has size $n$; and $\nu$ conjugacy classes of size~$2$ (each consisting of a rotation and its inverse).
Thus
\[
\sum_{C\in\conj(D_{2\nu+1})} |C|^2 = 1^2 + n^2 + \frac{n-1}{2}\cdot 2^2 = n^2 +2n -1 \/;
\]
and so, by our general formula \eqref{eq:new-AMZL-formula},
\begin{equation}
\label{eq:AMZL-of-dihedral-odd}
\begin{aligned}
\AMZL{D_{2\nu+1}}
   = 1 + 2(2^2-1)\left( 1- \frac{n^2+2n-1}{2n^2}\right)
 & = 1 + 6 \frac{n^2-2n+1}{2n^2} \\
 & = 1 + 3\left(1-\frac{1}{n}\right)^2 \,.
\end{aligned}
\end{equation}

In fact, when $n$ is odd, $D_n$ fits into a family of more general examples, for which one can simplify \eqref{eq:new-AMZL-formula} even further. These groups are the topic of the next subsection.
\end{subsection}

\begin{subsection}{Frobenius groups with abelian complement and kernel}
\label{ss:frobenius_ACAK}
Frobenius groups admit various characterizations or equivalent definitions. The following one is convenient for our purposes.

\begin{dfn}[cf.~{\cite[Theorem~8.2]{Passman_PGbook}}]
\label{d:frobenius-group}
A finite group $G$ is a \dt{Frobenius group} if it has a finite, proper, non-trivial subgroup $H$ which is \dt{malnormal}, i.e.~which satisfies $H \cap gHg^{-1} = \{e\}$ for all $g \in G \setminus H$. We say that $H$ is a \dt{Frobenius complement} in~$G$.
\end{dfn}

Given a Frobenius complement $H < G$, let $K \defeq \left( G \setminus \bigcup_{g\in G} gHg^{-1} \right) \cup \{e\}$.
Clearly $K$ is a conjugation-invariant susbet of $G$: by a deep result of Frobenius, $K$ is actually a subgroup of $G$, called the \dt{Frobenius kernel} of $G$, and $G$ is the semidirect product $K \rtimes H$.
(See Passman's book, in particular the proof of \cite[Theorem 17.1]{Passman_PGbook}, for further details.)

\begin{rem}
{\it A~priori}, $K$ depends on the particular choice of Frobenius complement~$H$.
However, it turns out that if $G$ has a Frobenius complement $H$ and $K$ is the corresponding Frobenius kernel, then $K$ is equal to the Fitting subgroup of~$G$; moreover, all proper, non-trivial, malnormal subrgoups of $G$ are conjugate in~$G$ (\cite[Cor\-oll\-ary~17.5]{Passman_PGbook}). These highly non-obvious results are sometimes summarized in the slogan ``a finite group can be Frobenius in at most one way''.
\end{rem}

For sake of brevity, we write ``let $G=K\rtimes H$ be Frobenius'' as an abbreviation for ``let $G$ be a finite Frobenius group, with Frobenius complement $H$ and Frobenius kernel $K$.''

\begin{prop}\label{p:frob_facts}
Let $G=K\rtimes H$ be Frobenius. Suppose $H$ is an abelian group of order~$h$, and $K$ is an abelian group of order~$k$.
Then $h$ divides $k-1$. Moreover:
\begin{numlist}
\item\label{li:conjugacy-in-frob}
 $G$ has trivial centre, $(k-1)/h$ conjugacy classes of size $h$, and $h-1$ conjugacy classes of size~$k$.
\item\label{li:characters-of-frob}
 $G$ has exactly $h$ linear characters; the remaining characters each have degree~$h$.
\end{numlist}
\end{prop}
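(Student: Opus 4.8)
The plan is to exploit the single structural fact that drives everything: the conjugation action of $H$ on $K$ is \emph{fixed-point-free}, meaning $hxh^{-1}\neq x$ whenever $h\in H\setminus\{e\}$ and $x\in K\setminus\{e\}$. First I would derive this from malnormality: if some $h\in H$ satisfied $hxh^{-1}=x$ with $x\in K\setminus\{e\}$, then $h\in H\cap xHx^{-1}$, and since $x\notin H$, Definition~\ref{d:frobenius-group} forces $h=e$. Writing $\alpha_h$ for the conjugation automorphism $x\mapsto hxh^{-1}$ of $K$, fixed-point-freeness says exactly that $\alpha_h$ has no nontrivial fixed point; since $K$ is finite abelian, this is equivalent to the map $\beta_h\colon y\mapsto y\,\alpha_h(y)^{-1}$ being a \emph{bijection} of $K$ for every $h\neq e$ (in additive notation $\beta_h=\mathrm{id}_K-\alpha_h$, which is injective, hence onto). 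The divisibility $h\mid k-1$ is then immediate: $H$ acts freely on the $(k-1)$-element set $K\setminus\{e\}$, so every orbit has exactly $h$ elements and there are $(k-1)/h$ of them.

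For part~\ref{li:conjugacy-in-frob} I would compute conjugacy classes by cases, using the unique factorisation $g=yh'$ with $y\in K$, $h'\in H$. For $x\in K\setminus\{e\}$, normality of $K$ and commutativity give $gxg^{-1}=\alpha_{h'}(x)$, so the class of $x$ is its $H$-orbit, of size $h$; this produces the $(k-1)/h$ classes of size $h$. For $h\in H\setminus\{e\}$, commutativity of $H$ gives $ghg^{-1}=yhy^{-1}=\beta_h(y)\,h$, and since $\beta_h$ is onto, $\beta_h(y)$ ranges over all of $K$; hence the class of $h$ is the whole coset $Kh$, of size $k$. Distinct elements of $H$ lie in distinct cosets of $K$ and hence in distinct classes, and these cosets exhaust $G\setminus K$, giving exactly $h-1$ classes of size $k$. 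Triviality of the centre is then automatic, since every non-identity element has class size $h>1$ or $k>1$.

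For part~\ref{li:characters-of-frob} I would first identify the derived subgroup. As $G/K\cong H$ is abelian, $\der{G}\subseteq K$; conversely the commutator $[y,h]=\beta_h(y)$ already ranges over all of $K$ (for any fixed $h\neq e$), so $K\subseteq\der{G}$ and thus $\der{G}=K$. By Remark~\ref{r:counting-lin-char} the number of linear characters is $|G|/|\der{G}|=hk/k=h$. Equating the number of irreducible characters with the number of conjugacy classes gives a total of $1+(k-1)/h+(h-1)=h+(k-1)/h$, hence exactly $(k-1)/h$ non-linear irreducibles; and $\sum_{\chi}d_\chi^2=|G|=hk$ leaves $\sum_{\text{non-linear}}d_\chi^2=h(k-1)$.

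The main obstacle is to force each non-linear degree to equal $h$ exactly, rather than merely controlling the sum of their squares. For this I would invoke Ito's theorem: since $K$ is an abelian normal subgroup, every irreducible character degree divides $[G:K]=h$, so each of the $(k-1)/h$ non-linear degrees satisfies $d_\chi\le h$. Their squares sum to $h(k-1)=\bigl((k-1)/h\bigr)h^2$, which is the largest value attainable under the constraint $d_\chi\le h$; equality forces $d_\chi=h$ for every non-linear $\chi$, as claimed. (Alternatively, one could induce the nontrivial characters of $K$ up to $G$ and apply Clifford theory, after checking that the dual action of $H$ on $\widehat{K}\setminus\{1\}$ is also free, but the appeal to Ito's theorem is shorter.)
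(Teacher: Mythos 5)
Your proposal is correct, and it takes a genuinely different route from the paper's, most markedly in part~(ii). In part~(i) the two proofs share the basic consequence of malnormality (the paper phrases it as $C_G(x)\cap H=\{e\}$ for $x\in K\setminus\{e\}$, you as fixed-point-freeness of the conjugation action of $H$ on $K$), but then diverge: the paper computes $C_G(x)=K$ and identifies the classes outside $K$ by using the defining description $G\setminus K=\bigl(\bigcup_{g\in G}gHg^{-1}\bigr)\setminus\{e\}$ together with disjointness of the conjugates of $H$, and it obtains $h\mid k-1$ from the permutation action of $G$ on cosets of $H$; you instead make the map $\beta_h=\mathrm{id}-\alpha_h$ the engine of the whole argument --- its injectivity (hence bijectivity, for $h\neq e$) gives the free action of $H$ on $K\setminus\{e\}$, the divisibility, and the fact that the class of each $h\in H\setminus\{e\}$ is the full coset $Kh$, so the classes outside $K$ are accounted for without ever invoking how the Frobenius kernel was defined. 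In part~(ii) the paper simply quotes the structure theorem for irreducible characters of Frobenius groups \cite[Theorem~6.34]{Isaacs_CTbook} (every $\chi\in\Irr(G)$ is either lifted from $G/K\cong H$ or induced from $K$), which immediately yields $h$ linear characters and the rest of degree $h$; you avoid that theorem entirely by proving $\der{G}=K$ (surjectivity of $\beta_h$ again), counting conjugacy classes and linear characters, and then forcing every non-linear degree to equal $h$ by combining Ito's theorem with the identity $\sum_{\chi}d_\chi^2=|G|$ in an extremal argument. The trade-off: the paper's citation is shorter and gives an explicit description of the non-linear characters (as inductions from $\widehat{K}$), which your counting argument does not; your route is more self-contained --- indeed it needs less than full Ito, since the elementary bound $d_\chi\le[G:K]$ (every $\chi\in\Irr(G)$ is a constituent of $\mathrm{Ind}_K^G\lambda$ for some linear character $\lambda$ of $K$) already suffices for the equality-forcing step --- and it establishes $\der{G}=K$ explicitly, a fact the paper's proof leaves implicit.
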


The proposition is an assembly of several standard facts about Frobenius groups. However, as it is difficult to locate a reference that states concisely what we need, we give a proof in Appendix~\ref{app:frobenius_facts}.

\begin{thm}\label{t:AMZL-of-Frobenius}
Let $G$ be a Frobenius group whose complement and kernel are both abelian; let $h$ and $k$ be the orders of the complement and kernel, respectively. Then
\begin{equation}\label{eq:AMZL-of-Frobenius}
\AMZL{G} = 1 + 2\cdot \frac{h^2-1}{h} \left(1 - \frac{h-1}{k}\right)\left(1- \frac{1}{k}\right).
\end{equation}
\end{thm}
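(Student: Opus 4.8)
The plan is to reduce everything to Theorem~\ref{t:amzl-2cd}. Proposition~\ref{p:frob_facts}\ref{li:characters-of-frob} tells us that $G$ has two character degrees with non-linear degree $m=h$: it is non-abelian because $H$ is a proper non-trivial subgroup, and every non-linear irreducible has degree $h$. Hence formula~\eqref{eq:new-AMZL-formula} applies, and it only remains to feed in three quantities: the order $|G|$, the order $|\der{G}|$, and the sum $\sum_{C}|C|^2$.

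First I would record $|G|=hk$ (as $G=K\rtimes H$ with $|K|=k$, $|H|=h$) and determine $|\der{G}|$ through the linear-character count rather than by computing the derived subgroup directly. By Proposition~\ref{p:frob_facts}\ref{li:characters-of-frob} there are exactly $h$ linear characters, so Remark~\ref{r:counting-lin-char} gives $|\der{G}| = |G|/|\fL| = hk/h = k$.

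Next I would compute $\sum_{C}|C|^2$ from Proposition~\ref{p:frob_facts}\ref{li:conjugacy-in-frob}. The one place where care is needed is that the proposition lists the $(k-1)/h$ classes of size $h$ and the $h-1$ classes of size $k$, which together account for only $(k-1)+(h-1)k = hk-1$ elements; the identity class $\{e\}$ of size $1$ must be added separately (consistent with $G$ having trivial centre). This yields
\[ \sum_{C\in\conj(G)}|C|^2 = 1 + \frac{k-1}{h}\cdot h^2 + (h-1)k^2 = 1 + h(k-1) + (h-1)k^2. \]

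Finally I would substitute $m=h$, $|G|=hk$, $|\der{G}|=k$, and this sum into~\eqref{eq:new-AMZL-formula}. The bracketed factor becomes $\dfrac{hk^2 - \bigl[1 + h(k-1) + (h-1)k^2\bigr]}{hk^2}$, whose numerator simplifies to $k^2-hk+h-1$; the only non-routine algebraic step is recognising this as $(k-1)(k+1-h)$. Writing $\dfrac{(k-1)(k+1-h)}{hk^2} = \dfrac{1}{h}\bigl(1-\tfrac{h-1}{k}\bigr)\bigl(1-\tfrac{1}{k}\bigr)$ and multiplying by the prefactor $2(h^2-1)$ gives exactly~\eqref{eq:AMZL-of-Frobenius}. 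The argument is essentially bookkeeping built on Proposition~\ref{p:frob_facts}; the single genuine pitfall is forgetting the identity class in the conjugacy-class sum, and the factorization above is the only calculation requiring a moment's thought.
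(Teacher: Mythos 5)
Your proposal is correct and follows essentially the same route as the paper: apply Proposition~\ref{p:frob_facts} to get $m=h$, $|\der{G}|=k$ (via the count of $h$ linear characters), and $\sum_C |C|^2 = 1 + h(k-1)+(h-1)k^2$ (including the identity class, exactly as the paper does), then substitute into~\eqref{eq:new-AMZL-formula} and simplify. The only cosmetic difference is that you factor the numerator directly as $(k-1)(k+1-h)$, whereas the paper expands to $1-\tfrac{h}{k}+\tfrac{h-1}{k^2}$ before factorizing; the two computations are equivalent.
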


\begin{proof}
By Proposition~\ref{p:frob_facts},
\[ \sum_{C\in\conj(G)} |C|^2 = 1 +
\frac{k-1}{h}\ h^2 + (h-1)\ k^2 = 1 + h(k-1) + (h-1)k^2 \,;
\]
and substituting the remaining information from Proposition~\ref{p:frob_facts} into the general formula \eqref{eq:new-AMZL-formula} yields
\[ \begin{aligned}
\frac{\AMZL{G} -1}{2}
& =  (h^2-1) \left( 1- \frac{ 1 + h(k-1) + (h-1)k^2}{hk^2} \right) \\
& =  \frac{h^2-1}{h} \cdot \frac{ -1 - hk+h + k^2}{k^2}  \\
& =  \frac{h^2-1}{h} \left( 1-\frac{h}{k} + \frac{h-1}{k^2}\right)\,;
\end{aligned} \]
factorizing and rearranging this gives the formula \eqref{eq:AMZL-of-Frobenius}, as required.
\end{proof}

\begin{eg}[Dihedral groups of odd order, revisited]
\label{eg:AMZL-of-dihedral-odd}
Let $n$ be an odd integer with $n\geq 3$. Using the standard presentation of $D_n$ as given earlier, we see that the subgroup generated by the `reflection' $t$ is malnormal, while the Frobenius kernel turns out to be the subgroup generated by the `rotation'~$r$. Putting $h=2$ and $k=n$ in \eqref{eq:AMZL-of-Frobenius} gives
\[ \AMZL{D_n} = 1 + 3 \left(1- \frac{1}{n}\right)^2 \,, \]
just as we had before.
\end{eg}

\begin{eg}[Affine groups of finite fields]
\label{eg:AMZL-of-affine}
Let $\bbF_q$ be a finite field of order $q$, where $q$ is a prime power $\geq 3$.
The \dt{affine group of $\bbF_q$}, which we shall denote by $\Aff(\bbF_q)$, is the set
\[ \left\{ \affine{a}{b} \st a \in\bbF_q^\times, b\in \bbF_q \right\} \]
equipped with the group structure it inherits from the usual matrix product and inversion.
It is a metabelian group; more precisely, it is isomorphic to the semidirect product $\bbF_q \rtimes \bbF_q^\times$\/.

It is straightforward to check that the subgroup of $\Aff(\bbF_q)$ corresponding to the multiplicative group of $\bbF_q$ is a proper, non-trivial, malnormal subgroup; the Frobenius kernel turns out to be the normal subgroup of $\Aff(\bbF_q)$ corresponding to the additive group of~$\bbF_q$.
Both are abelian, so we can apply Theorem~\ref{t:AMZL-of-Frobenius}, which yields
\[ \begin{aligned}
\AMZL{\Aff(\bbF_q)}
 &  =  1 + 2\cdot\frac{(q-1)^2-1}{q-1}\left(1 - \frac{q-2}{q}\right) \left( 1 -\frac{1}{q}\right) \\
 &  =  1 + 2\cdot\frac{2q-q^2}{q-1}\cdot\frac{2}{q}\cdot \frac{q-1}{q} \\
 &  =  1 + 4\cdot \frac{q-2}{q} \\
 &  =  5 - \frac{8}{q} \,.
\end{aligned} \]
\end{eg}

\begin{rem}
One can also compute $\AMZL{\Aff(\bbF_q)}$ more directly from the character table of $\Aff(q)$, which is simple and well-known, and can be found in standard sources. In fact, the exact computation for these examples, which arose in other work of the present authors related to \cite{Stegmeir}, provided some of the motivation for Theorem~\ref{t:amzl-2cd}.
\end{rem}

\begin{rem}\label{r:pain-in-ASS}
For all odd primes $p$, $2\leq \AM(\Aff(p)) \leq 5$, while $\Aff(p)$ has an irreducible representation of dimension $p-1$. This shows that the amenability constant of $\Zl^1(G)$ cannot be bounded from below by an increasing function of $\max\{ d_\chi : \chi \in \Irr(G)\}$. (For context, see the remarks made at the end of Section~\ref{s:AMZL-formulas}.)
\end{rem}

\begin{eg}[$a^2x+b$ groups]
\label{eg:AMZL-of-subgroup}
Let $q$ be an odd prime power $\geq 5$, and let $d=(q-1)/2$.
Consider the following subgroup of $\Aff(\bbF_q)$, sometimes referred to as the
``$a^2x+b$ group over~$\bbF_q$''\/:
\[ G_q \defeq \left\{ \left( \begin{matrix} a^2 & b \\ 0 & 1 \end{matrix} \right)  \st a\in \bbF_q^\times, b\in \bbF_q \right\}. \]

Recalling that $\bbF_q^\times$ is cyclic, pick a generator $z$, and let $H$ be the subgroup of $\Aff(\bbF_q)$ generated by $\left(\begin{matrix} z^2 & 0 \\ 0 & 1 \end{matrix} \right)$.
One can check that $H$~is malnormal, and so $G_q$ is Frobenius; the Frobenius kernel $K$ turns out to be the normal subgroup corresponding to the additive group of $\bbF_q$. So both $K$ and $H$ are abelian; the former has order $q$ while the latter has order $d$,
so using \eqref{eq:AMZL-of-Frobenius} we get
\[
\begin{aligned}
\AMZL{G_q}
& = 1 + 2\cdot\frac{d^2-1}{d}\left(1- \frac{d-1}{q}\right)\left(1-\frac{1}{q}\right) \\
& = 1 + 2\cdot\frac{q^2-2q-3}{2(q-1)}\ \frac{q+3}{2q} \ \frac{q-1}{q}
\end{aligned} \]
which simplifies to
\begin{equation}\label{eq:AMZL-of-subgroup}
\AMZL{G_q} = 1 + \frac{q+1}{2}\ \left( 1 - \frac{9}{q^2} \right) \\
\end{equation}
\end{eg}

As a consistency check: when $q=5$, Equation~\eqref{eq:AMZL-of-subgroup} gives $\AMZL{G_5} = 73/25$. On the other hand, it is straightforward to check that $G_5$ is isomorphic to the dihedral group of order $10$, and
using our earlier formulas we have $\AMZL{D_5} = 73/25$.

\begin{rem}
Even though $G_q$ is a (index $2$) subgroup of $\Aff(\bbF_q)$, it may have a larger ZL-amenability constant. Indeed, it is clear from the formulas obtained in Examples \ref{eg:AMZL-of-affine} and \ref{eg:AMZL-of-subgroup} that
\[ \lim_{q\to\infty} \AMZL{\Aff_q} = 5 \quad\text{while}\quad \lim_{q\to\infty} q^{-1}\AMZL{G_q} = \frac{1}{2} \]
\end{rem}

Example~\ref{eg:AMZL-of-subgroup} shows that within the class of groups with two character degrees, we can obtain arbitrarily large ZL-amenability constants. It is natural to ask how small such constants can be. For Frobenius groups with abelian complement and kernel, we can obtain a complete answer.

\begin{thm}\label{t:lower-bound-AMZL-Frob-AKAC}
Let $G$ be a Frobenius group with abelian complement and kernel. Then\hfil\newline
$\AMZL{G} \geq 7/3$,
with equality if and only if $G$ is the dihedral group of order $6$.
\end{thm}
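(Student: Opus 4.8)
The plan is to minimize the right-hand side of Theorem~\ref{t:AMZL-of-Frobenius} over the constraints that Proposition~\ref{p:frob_facts} imposes on $(h,k)$. By that proposition, $h$ divides $k-1$, and since $H$ must be a proper non-trivial subgroup we have $h\geq 2$ and $k\geq 3$. So I would treat
\[
\AMZL{G} = 1 + 2\cdot\frac{h^2-1}{h}\left(1-\frac{h-1}{k}\right)\left(1-\frac{1}{k}\right)
\]
as a function $F(h,k)$ on the set of admissible pairs, and show $F(h,k)\geq 7/3$ with equality only at $(h,k)=(2,3)$ (which is exactly $D_3$, the dihedral group of order~$6$).

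First I would dispose of the smallest complement, $h=2$. Then the constraint $h\mid k-1$ forces $k$ odd, $k\geq 3$, and $F(2,k)=1+3(1-1/k)^2$, recovering the odd-dihedral formula~\eqref{eq:AMZL-of-dihedral-odd}. This is strictly increasing in $k$, so its minimum over odd $k\geq 3$ is attained at $k=3$, giving $F(2,3)=1+3\cdot(2/3)^2=7/3$. Thus along the $h=2$ slice the bound holds with equality precisely at $k=3$.

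The main work is to show that every other admissible pair gives a strictly larger value. For $h\geq 3$ I would bound $F$ below by controlling the two bracketed factors. Since $h\mid k-1$ we have $k\geq h+1$, so $1-(h-1)/k \geq 1-(h-1)/(h+1)=2/(h+1)$ and $1-1/k\geq 1-1/(h+1)=h/(h+1)$; both factors are moreover increasing in $k$, so the worst case is $k=h+1$. Substituting gives
\[
F(h,h+1)\geq 1 + 2\cdot\frac{h^2-1}{h}\cdot\frac{2}{h+1}\cdot\frac{h}{h+1}
= 1 + \frac{4(h-1)}{h+1},
\]
and this lower bound is itself increasing in $h$, so for $h\geq 3$ it is at least $1+4\cdot 2/4 = 3 > 7/3$. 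Hence no pair with $h\geq 3$ can meet the bound, and the only equality case is $(2,3)$.

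The step I expect to require the most care is the reduction to $k=h+1$: one must confirm that $\partial F/\partial k>0$ on the admissible region (equivalently, that both factors $1-(h-1)/k$ and $1-1/k$ are positive and increasing, which holds since $k>h-1\geq 1$), so that fixing $k$ at its minimal admissible value $h+1$ genuinely yields the infimum over $k$ for each fixed $h$. Once monotonicity in both variables is pinned down, the finite-case bookkeeping is routine, and the identification of the equality case with $D_3$ follows from $h=2,k=3$ together with Example~\ref{eg:AMZL-of-dihedral-odd}.
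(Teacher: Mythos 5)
Your proof is correct and takes essentially the same route as the paper's: both use that for fixed $h$ the expression in Theorem~\ref{t:AMZL-of-Frobenius} is increasing in $k$, reduce via $h \mid k-1$ to the extremal case $k=h+1$ (where the value becomes $1+4(h-1)/(h+1)$), and then invoke monotonicity in $h$ to locate the minimum at $h=2$. The only cosmetic difference is that you split off the slice $h=2$ for separate treatment, whereas the paper runs a single monotonicity argument in $h$ that simultaneously yields the bound and the equality case $(h,k)=(2,3)$.
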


\begin{proof}
Let $h$ be the order of the Frobenius complement of $G$, and $k$ the order of its Frobenius kernel. Note that $G$ is isomorphic to $D_3$ if and only if $h=2$ and $k=3$.

To reduce notational clutter, let $F(k,h)$ denote $\frac{1}{2}(\AMZL{G}-1)$. By Theorem~\ref{t:AMZL-of-Frobenius},
\begin{equation}\label{eq:DAFFY}
F(k,h) =
\frac{h^2-1}{h} \left(1 - \frac{h-1}{k}\right)\left(1- \frac{1}{k}\right)
\tag{$\clubsuit$}
\end{equation}
and it suffices to prove that $F(k,h)\geq 2/3$ with equality if and only if $(h,k)=(2,3)$ (subject to $h$ and $k$ arising from a Frobenius group of the specified form).

Note that for fixed $h$, $F(\cdot, h)$ is a strictly increasing function.
As observed above, $h$ divides $k-1$, so in particular $k\geq h+1$; hence, $F(k,h) \geq F(h+1,h)$, with equality if and only if $h=k-1$.
Direct calculation gives
\[ \begin{aligned}
F(h+1,h)
  = \frac{h^2-1}{h} \cdot \frac{2}{h+1} \cdot \frac{h}{h+1}
  = \frac{2(h-1)}{h+1} = 2\left(1-\frac{2}{h+1}\right),
\end{aligned} \]
and so $F(h+1,h) \geq 2/3$, with equality if and only if $h=2$. This completes the proof.
\end{proof}

If we consider more general groups with two character degrees, then there is an infinite family of such groups whose ZL-amenability constants are less than~$2$. This will be seen in the next and final subsection of the paper.
\end{subsection}

\begin{subsection}{Extra-special $p$-groups.}

\begin{dfn}
\label{d:extraspecial}
Fix a prime $p$. A finite group $G$ is \dt{$p$-extraspecial} if it has order $p^{2n+1}$ for some integer $n$ and has the following properties:
\begin{numlist}
\item The centre $Z(G)$ and the derived subgroup $\der{G}$ both have order $p$.
\item The quotient $G/Z(G)$ is abelian, and each non-identity element in the quotient has order~$p$.
\end{numlist}
\end{dfn}

Such groups do exist (for instance, the dihedral group of order $8$ is $2$-extraspecial), and their character tables and conjugacy classes turn out to be uniquely determined by these conditions.
In particular, each non-linear irreducible group character of $G$ is supported on $Z(G)$ and has degree~$p^n$.
This follows from, e.g. \cite[Chapter 5, Theorem 5.5]{Gor_FGbook_ed2}, as pointed out to the second author by D.~F. Holt. Alternatively, a short argument using some basic character theory is described by I.~M. Isaacs in the appendix to \cite{Dia_threads}.

\begin{eg}\label{eg:AMZL-of-extraspecial}
Let $G$ be an extraspecial group of order $p^{2n+1}$, where $p$ is a prime. 
We know every non-linear character has degree $p^n$, and we know $\der{G}$ has order~$p$. To apply Theorem~\ref{t:amzl-2cd}, we need also to know the sizes of the conjugacy classes. By some elementary group theory (see e.g.~the appendix of \cite{Dia_threads}), the conjugacy classes of $G$ are either elements of the centre or the non-trivial cosets of the derived subgroup. Thus there are $p$ conjugacy classes of size $1$ and $p^{2n}-1$ conjugacy classes of size~$p$, and no others.
Therefore
\[
\sum_{C\in\conj(G)} |C|^2 = p\cdot 1^2 + (p^{2n}-1)\cdot p^2 = p^{2n+2}-p^2+p
\]
and so Theorem~\ref{t:amzl-2cd} gives
\begin{equation}
\label{eq:AMZL-of-extraspecial}
\begin{aligned}
\AMZL{G}
& = 1 + 2(p^{2n}-1)\left( 1 - \frac{p^{2n+2}-p^2+p}{ p^{2n+2} } \right) \\
& = 1 + 2\left(1 - \frac{1}{p^{2n}} \right)\ \left(1 - \frac{1}{p} \right)  \\
\end{aligned}
\end{equation}
\end{eg}

\begin{rem}
If $G$ is an extraspecial group of order $2^{2n+1}$, then
$\AMZL{G} = 2 - 2^{-2n}$\/. Thus we have an infinite family of finite groups $G$ for which $1 < \AMZL{G} < 2$.
\end{rem}

\begin{rem}\label{r:lowest-AMZL-so-far}
Within the class of extra-special $p$-groups, the ZL-amenability constant is minimized when we take $p=2$ and $n=1$. This example is nothing but the dihedral group of order $8$, whose amenability constant is $7/4$.
This is the smallest ZL-amenability constant we have found for any non-abelian group.
\end{rem}

\begin{qu}
Does there exist a non-abelian, finite group whose ZL-amenability constant is $<7/4$? Can such a group have two character degrees?
\end{qu}

\end{subsection}

\subsection{Summary information}
We summarize the findings of this section in Figure~\ref{fig:table}.
\begin{figure}[hpt]
\begin{tabular}{l|c|c|c|c|c|r}
   Ref. &		$G$
	&     $|G|$	& $|\fL|$ & c.d.
	& $\AMZL{G}-1$  & min. \\

\hline

Ex.~\ref{eg:AMZL-of-affine} & $\Aff(\bbF_q)$,
	& $q(q-1)$   &   $q-1$	& $q-1$
	& $4(1-2q^{-1})$	&	$4/3$ \\
	& $q\geq 3$
	&	& 	& 	& \\

Ex.~\ref{eg:AMZL-of-subgroup} & $ax^2+b$ of $\bbF_q$,
	& ${\displaystyle\frac{q(q-1)}{2}}$ & $\frac{1}{2}(q-1)$ & $q-1$
	& $\frac{1}{2}(q+1) (1 - 9q^{-2})$  &	$48/25$ \\
	& $q\geq 5$
	&	& 	& 	& \\

Eq.~\eqref{eq:AMZL-of-dihedral-odd} & $D_n$,
	& $2n$ & $2$ & $2$
	& $3(1-n^{-1})^2$	&	$4/3$ \\
	& $n$ odd $\geq 3$
	&	& 	& 	& \\

Eq.~\eqref{eq:AMZL-of-dihedral-even} & $D_n$,
	& $2n$ & $4$ & $2$
	& $3(1-(2n)^{-1})^2$	&	$3/4$ \\
	& $n$ even $\geq 4$
	&	& 	& 	& \\

Ex.~\ref{eg:AMZL-of-extraspecial} & $p$-extraspecial
	& $p^{2n+1}$ & $p^{2n}$ & $p^n$
	& $2(1- p^{-2n}) (1-p^{-1})$  &	$3/4$
\end{tabular}
\caption{Summary table for some groups with two character degrees}
\label{fig:table}
\end{figure}
\begin{itemize}
\item ``Ref.'' gives the number of the relevant theorem, example or equation.
\item $\fL$ is the set of linear characters.
\item ``c.d.'' stands for the character degree of the non-linear characters.
\item ``min.'' denotes the minimum value of $\AMZL{G}-1$ within the specified family of groups.
\end{itemize}

\end{section}

\appendix
\begin{section}{Properties of Frobenius groups}
\label{app:frobenius_facts}
In this appendix we collect the facts about Frobenius groups that are needed to prove Proposition~\ref{p:frob_facts}. Since we are interested only in the special case where both complement and kernel are abelian, it will sometimes be easier to give short proofs, than to cite general results and then specialize. On the other hand, in some places we shall merely give appropriate references to the literature.

Throughout, $G=K\rtimes H$ is Frobenius; $k$ and $h$ denote the orders of $K$ and $H$ respectively.
By considering the permutation action of $G$ on cosets of $H$, it follows easily from the malnormal property that $h$ divides~$k-1$. (This does not need Frobenius's result that $K$ is a group.)

\begin{proof}[Proof of Proposition~\ref{p:frob_facts}~\ref{li:conjugacy-in-frob}]
Throughout this proof $x^G$ denotes the conjugacy class of $x$ in $G$, and $C_G(x)$ denotes the centralizer of $x$ in~$G$.
We repeatedly use the fact that each element of $G$ can be written either as $xb$ where $x\in K$ and $b\in H$, or as $ay$ where $a\in H$ and $y\in K$. (This is immediate from the decomposition of $G$ as a semidirect product of $H$ and~$K$.)

The first step is to identify the conjugacy classes of $K$ inside $G$.
Let  $x\in K \setminus\{e\}$. By the malnormal property, $C_G(x)\cap H =\{e\}$.
Since $K$ is abelian and $G=HK$, it follows that $C_G(x)=K$. Therefore $|x^G| = |G| / |K| = h$.

For the second step, recall that by definition, $G\setminus K = \left( \bigcup_{g\in G} gHg^{-1}\right) \setminus\{e\}$.
Since $H$ is abelian and $G=KH$, we obtain $G\setminus K = \left( \bigcup_{x\in K} xHx^{-1}\right) \setminus\{e\}$.
Now, by malnormality of $H$ inside $G$, and the fact that $H\cap K=\{e\}$, we see that
\[ xHx^{-1}\cap yHy^{-1}= \{e\} \qquad\text{ whenever $x,y\in K$ and $x\neq y$.} \]
Thus, the function $H\setminus\{e\} \to \conj(G)$, $a\mapsto a^G$, is injective, and $|a^G| = k$ for each $a\in H\setminus\{e\}$. This gives us the required partition of $G\setminus K$ into $h-1$ disjoint conjugacy classes, each of size~$k$.
\end{proof}

To prove the second part of Proposition~\ref{p:frob_facts}, we appeal to some general results on the character theory of Frobenius groups.

\begin{propstar}
Let $G=K\rtimes H$ be Frobenius. The following belong to $\Irr(G)$.
\begin{itemize}
\item the characters arising by composing irreducible characters of $H$ with the quotient map $G \to G/K \cong H$;
\item the characters arising by inducing an irreducible character of $K$ up to $G$.
\end{itemize}
Moreover, every irreducible character of $G$ arises in this way.
\end{propstar}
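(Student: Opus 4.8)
\noindent\emph{Proof proposal.}
The plan is to realize both families as genuine irreducible characters, show that the two families are disjoint and correctly parametrized, and then finish by a dimension count. Write $q\colon G\to G/K\cong H$ for the quotient map. For the first family, if $\psi\in\Irr(H)$ then $\psi\circ q$ is the character of the representation obtained by precomposing an irreducible representation of $H$ with $q$; since $q$ is surjective the $G$-invariant subspaces coincide with the $H$-invariant ones, so $\psi\circ q$ is irreducible, and distinct $\psi$ yield distinct characters. These are precisely the irreducible characters of $G$ whose kernel contains $K$, and there are $|\Irr(H)|=|\widehat{H}|$ of them when $H$ is abelian (and $|\Irr(H)|$ of them in general).

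For the second family, fix $\theta\in\Irr(K)$ with $\theta\neq 1_K$ and consider $\operatorname{Ind}_K^G\theta$. Since $K\trianglelefteq G$, the double cosets $K\backslash G/K$ are simply the cosets of $K$, which may be indexed by $H$ because $G=KH$; Mackey's restriction formula together with Frobenius reciprocity then gives
\[
\langle \operatorname{Ind}_K^G\theta,\ \operatorname{Ind}_K^G\theta\rangle_G
 = \sum_{b\in H}\langle \theta^{\,b},\theta\rangle_K
 = \bigl|\{\,b\in H \st \theta^{\,b}=\theta\,\}\bigr|,
\]
where $\theta^{\,b}(x)=\theta(b^{-1}xb)$ and the last equality uses that each $\theta^{\,b}$ is again irreducible over $K$. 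Thus $\operatorname{Ind}_K^G\theta$ is irreducible exactly when the stabilizer of $\theta$ in $H$ is trivial, and the same computation shows that $\operatorname{Ind}_K^G\theta$ and $\operatorname{Ind}_K^G\theta'$ coincide when $\theta,\theta'$ lie in one $H$-orbit and are orthogonal otherwise.

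The crux is therefore to prove that $H$ acts with trivial stabilizers on $\Irr(K)\setminus\{1_K\}$. First I would use malnormality at the level of elements: for $b\in H\setminus\{e\}$ and $x\in K\setminus\{e\}$, the relation $bxb^{-1}=x$ rearranges to $b=x^{-1}bx\in H\cap x^{-1}Hx$, and since $x^{-1}\in K\setminus\{e\}\subseteq G\setminus H$ this intersection is trivial, forcing $b=e$; hence $H$ acts fixed-point-freely on $K\setminus\{e\}$. To transfer this from elements to characters I would invoke Brauer's permutation lemma, which equates, for each fixed $b\in H$, the number of $b$-invariant irreducible characters of $K$ with the number of $b$-invariant conjugacy classes of $K$. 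It then suffices to show that for $b\neq e$ the only $b$-invariant conjugacy class of $K$ is $\{e\}$, and \emph{this} is where I expect the main difficulty: I would resolve it using coprimality $\gcd(h,k)=1$ (immediate from the already-noted fact that $h\mid k-1$) together with the standard coprime-action lemma that every $\langle b\rangle$-invariant class of $K$ meets $C_K(b)=\{e\}$. In the abelian case that the paper actually needs this last step is trivial, since conjugacy classes are singletons and an ``invariant class'' is literally a fixed element.

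Finally I would verify completeness by a degree count. The free action partitions $\Irr(K)\setminus\{1_K\}$ into orbits of size $h$, all characters in an orbit sharing a common degree, so the induced characters contribute
\[
\sum_{\text{orbits}}\bigl(h\,\theta(1)\bigr)^2 = h\!\!\sum_{\theta\neq 1_K}\!\!\theta(1)^2 = h(k-1)
\]
to the sum of squared degrees, while the inflated family contributes $|H|=h$. No inflated character can equal an induced one, since the restriction to $K$ of $\operatorname{Ind}_K^G\theta$ is $\sum_{b\in H}\theta^{\,b}$, which has no $1_K$-component and so does not have $K$ in its kernel. Hence these are pairwise distinct irreducible characters whose squared degrees sum to $h+h(k-1)=hk=|G|$, and therefore they exhaust $\Irr(G)$.
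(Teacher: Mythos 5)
Your proof is correct, but it is worth noting that the paper does not actually prove this proposition at all: its ``proof'' is a one-line citation to Isaacs \cite[Theorem~6.34]{Isaacs_CTbook}, so your self-contained argument is genuinely different by construction. Your route is sound at every step: inflation gives the first family; Mackey plus Frobenius reciprocity reduces irreducibility of $\operatorname{Ind}_K^G\theta$ to freeness of the $H$-action on $\Irr(K)\setminus\{1_K\}$; the element-level freeness of the action of $H$ on $K\setminus\{e\}$ is exactly the right use of malnormality; and the transfer from elements to characters via Brauer's permutation lemma together with the coprime-action (Glauberman) lemma is legitimate, since $\gcd(h,k)=1$ follows from $h\mid k-1$ and the acting group $\langle b\rangle$ is cyclic, hence solvable, so no appeal to Feit--Thompson is hidden there. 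The concluding degree count $h + h(k-1) = |G|$ is a valid way to get exhaustion, though you could avoid it: if $\chi\in\Irr(G)$ and $\operatorname{Res}_K\chi$ contains some $\theta\neq 1_K$, then Frobenius reciprocity gives $\langle\chi,\operatorname{Ind}_K^G\theta\rangle\neq 0$, and irreducibility of the induced character forces $\chi=\operatorname{Ind}_K^G\theta$; otherwise $K\subseteq\ker\chi$ and $\chi$ is inflated --- this makes ``moreover'' immediate without counting. Two further remarks on the comparison: first, Isaacs' cited theorem works from the centralizer characterization ($C_G(x)\leq K$ for $e \neq x\in K$) and so covers general Frobenius groups without presupposing the semidirect decomposition, whereas you use $G=KH$ throughout --- harmless here, since the paper's setup provides it; second, as you observe, the only genuinely heavy ingredient in your argument (the coprime-action lemma) evaporates in the abelian-kernel case, which is the only case the paper actually uses, so for the paper's purposes your proof is entirely elementary. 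One nitpick about the statement rather than your proof: inducing $\theta=1_K$ does \emph{not} give an irreducible character, so the proposition as phrased implicitly means nontrivial $\theta$; you silently (and correctly) made this restriction.
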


\begin{proof}
See, for example, \cite[Theorem~6.34]{Isaacs_CTbook}.
\end{proof}

In the special case where $H$ and $K$ are abelian, it follows immediately that $G$ has two character degrees. The irreducible characters of $G$ that arise by inducing irreducible characters from $K$ all have degree $|G:K|=h$; the remaining characters are all linear, arising from the irreducible characters of~$H$, and there are precisely $|\widehat{H}| = |H| = h$ of them. This completes the proof of Proposition~\ref{p:frob_facts}~\ref{li:characters-of-frob}.
\end{section}

\subsection*{Acknowledgements}
The first author (MA) was supported by a Dean's Scholarship from the University of Saskatchewan. The second author (YC) was partially supported by NSERC Discovery Grant 402153--2011 and the third author (ES) by NSERC Discovery Grant 366066--2009.
The authors thank the referee for some useful corrections and suggestions.


\begin{thebibliography}{1}

\bibitem{AzSaSp}
{\sc A.~Azimifard, E.~Samei}, and {\sc N.~Spronk}, {\em Amenability properties of the
  centres of group algebras}, J. Funct. Anal., 256 (2009), pp.~1544--1564.

\bibitem{Dia_threads}
{\sc P.~Diaconis}, {\em Threads through group theory}, in Character theory of
  finite groups, vol.~524 of Contemp. Math., Amer. Math. Soc., Providence, RI,
  2010, pp.~33--47.
\newblock With an appendix by Marty Isaacs.

\bibitem{Gor_FGbook_ed2}
{\sc D.~Gorenstein}, {\em Finite groups}, Chelsea Publishing Co., New York,
  second~ed., 1980.

\bibitem{Isaacs_CTbook}
{\sc I.~M. Isaacs}, {\em Character theory of finite groups}, Academic Press
  [Harcourt Brace Jovanovich Publishers], New York, 1976.
\newblock Pure and Applied Mathematics, No. 69.

\bibitem{IP_PJM68}
{\sc I.~M. Isaacs} and {\sc D.~S. Passman}, {\em A characterization of groups in
  terms of the degrees of their characters. {II}}, Pacific J. Math., 24 (1968),
  pp.~467--510.

\bibitem{JamLie}
{\sc G.~James} and {\sc M.~Liebeck}, {\em Representations and characters of groups},
  Cambridge University Press, New York, second~ed., 2001.

\bibitem{Passman_PGbook}
{\sc D.~Passman}, {\em Permutation groups}, W. A. Benjamin, Inc., New York --
  Amsterdam, 1968.

\bibitem{ri}
{\sc D.~Rider}, {\em Central idempotent measures on compact groups}, Trans.
  Amer. Math. Soc., 186 (1973), pp.~459--479 (1974).

\bibitem{Stegmeir}
{\sc U.~Stegmeir}, {\em Centers of group algebras}, Math. Ann., 243 (1979),
  pp.~11--16.

\end{thebibliography}

\vfill\eject

\vfill
\noindent Mahmood Alaghmandan, Yemon Choi, and Ebrahim Samei

\noindent
Department of Mathematics and Statistics\\
McLean Hall, University of Saskatchewan\\
Saskatoon (SK), Canada S7N 5E6

\smallskip\noindent
E-mails: \texttt{mahmood.a@usask.ca}, \texttt{choi@math.usask.ca}, \texttt{samei@math.usask.ca} 

\end{document}